\begin{document}

\title{Integral representation of martingales motivated by the problem
  of endogenous completeness in financial economics}

\author{Dmitry Kramkov\thanks{This research was supported in part by
    the Carnegie Mellon-Portugal Program and by the Oxford-Man
    Institute for Quantitative Finance at the University of Oxford.}\\
  Carnegie Mellon University and
  University of Oxford,\\
  Department of Mathematical Sciences,\\ 5000 Forbes Avenue,
  Pittsburgh, PA, 15213-3890, USA \and Silviu Predoiu\thanks{This
    research was part of PhD thesis completed
    at Carnegie Mellon University in June 2011.}\\
  Citigroup, New York, USA}

\date{\today}

\maketitle
\begin{abstract}
  Let $\mathbb{Q}$ and $\mathbb{P}$ be equivalent probability measures
  and let $\psi$ be a $J$-dimensional vector of random variables such
  that $\frac{d\mathbb{Q}}{d\mathbb{P}}$ and $\psi$ are defined in
  terms of a weak solution $X$ to a $d$-dimensional stochastic
  differential equation. Motivated by the problem of \emph{endogenous
    completeness} in financial economics we present conditions which
  guarantee that every local martingale under $\mathbb{Q}$ is a
  stochastic integral with respect to the $J$-dimensional martingale
  $S_t \set \mathbb{E}^{\mathbb{Q}}[\psi|\mathcal{F}_t]$. While the
  drift $b=b(t,x)$ and the volatility $\sigma = \sigma(t,x)$
  coefficients for $X$ need to have only minimal regularity properties
  with respect to $x$, they are assumed to be analytic functions with
  respect to $t$. We provide a counter-example showing that this
  $t$-analyticity assumption for $\sigma$ cannot be removed.
\end{abstract}

\begin{description}
\item[AMS 2010 subject classifications:] Primary 60G44, 91B51, 91G99;
  secondary 35K15, 35K90.
\item[Key words:] integral representation, martingales, diffusion,
  parabolic equations, analytic semigroups, real analytic functions,
  Krylov-Ito formula, dynamic completeness, equilibrium.
\end{description}

\section{Introduction}
\label{sec:introduction}

Let $(\Omega, \mathcal{F}_1, \mathbf{F} = (\mathcal{F}_t)_{t\in
  [0,1]}, \mathbb{P})$ be a complete filtered probability space,
$\mathbb{Q}$ be an equivalent probability measure, and $S = (S^j_t)$
be a $J$-dimensional martingale under $\mathbb{Q}$. It is often
important to know whether every local martingale $M = (M_t)$ under
$\mathbb{Q}$ admits an integral representation with respect to $S$,
that is,
\begin{equation}
  \label{eq:1}
  M_t = M_0 + \int_0^t H_u dS_u, \quad t\in [0,1], 
\end{equation}
for some predictable $S$-integrable process $H = (H^j_t)$. For
instance, in mathematical finance, which is the topic of a particular
interest to us, the existence of such a martingale representation
corresponds to the \emph{completeness} of the market model driven by
stock prices $S$, see \citet{HarrPlis:83}.

A general answer is given in~\citet[Section XI.1(a)]{Jacod:79}.
Jacod's theorem states that the integral representation property holds
if and only if $\mathbb{Q}$ is the unique equivalent martingale
measure for $S$. In mathematical finance this result is sometimes
referred to as the 2nd fundamental theorem of asset pricing.

In many applications, the process $S$ is defined in a \emph{forward
  form}, in terms of its predictable characteristics under
$\mathbb{P}$. The density process $Z$ of a martingale measure
$\mathbb{Q}$ for $S$ is then constructed through the use of the
Girsanov theorem and its generalizations, see \citet{JacodShir:02}.
The verification of the existence of integral representations for all
$\mathbb{Q}$-martingales under $S$ is often straightforward. For
example, if $S$ is a diffusion process under $\mathbb{P}$ with the
drift vector-process $b =(b_t)$ and the volatility matrix-process
$\sigma = (\sigma_t)$, then such a representation exists if and only
if $\sigma$ has full rank $d\mathbb{d\mathbb{P}} \times dt$ almost
surely.

In this paper we assume that both $S$ and $Z$ are described in a
\emph{backward form}, through their terminal values. Given random
variables $\xi>0$ and $\psi = (\psi^j)_{j=1,\dots,J}$ they are defined
as
\begin{gather*}
  Z_1 \set \frac{d\mathbb{Q}}{d\mathbb{P}} \set
  \frac{\xi}{\mathbb{E}[\xi]}, \\
  S_t \set \mathbb{E}^{\mathbb{Q}}[\psi|\mathcal{F}_t], \quad t\in
  [0,1].
\end{gather*}
We are looking for (easily verifiable) conditions on $\xi$ and $\psi$
guaranteeing the integral representation of all
$\mathbb{Q}$-martingales with respect to $S$.

Our work is motivated by the problem of \emph{endogenous completeness}
in continuous-time financial economics which naturally arises in the
construction of Radner equilibrium, see \citet*{AnderRaim:08},
\citet*{HugMalTrub:12}, and \citet*{RiedHerz:12}, and in the study of
the equilibrium-based price impact models, see \citet*{BankKram:11b}
and \citet*{Germ:11}. Here $\xi$ is an equilibrium state price
density, usually defined implicitly by a fixed point argument, and
$\psi=(\psi^j)$ is the random vector of the cumulative discounted
dividends for traded stocks.  The term ``endogenous'' is used because
the stock prices $S$ are \emph{computed} as an output of
equilibrium. A similar problem also arises in the verification of the
completeness of markets where, in addition to stocks, one can also
trade options, see \citet*{DavisObjoj:08}.

We focus on the case when $\xi$ and $\psi$ are defined in terms of a
weak solution $X$ to a $d$-dimensional stochastic differential
equation. With respect to $x$ the coefficients of this equation
satisfy classical conditions guaranteeing weak existence and
uniqueness: the drift vector $b(t,\cdot)$ is measurable and bounded
and the volatility matrix $\sigma(t,\cdot)$ is uniformly continuous
and bounded and has a bounded inverse. With respect to $t$ our
assumptions are more stringent: $b(\cdot,x)$ and $\sigma(\cdot,x)$ are
required to be analytic functions on $(0,1)$. We give an example
showing that this $t$-analyticity assumption on $\sigma$ cannot be
removed.

Our results complement those in \cite{AnderRaim:08},
\cite{HugMalTrub:12}, and \cite{RiedHerz:12}. The primary focus of
these papers is the existence of Radner equilibrium. The ``backward''
martingale representation is used implicitly as a link between static
and dynamic equilibrium.

In the pioneering paper \cite{AnderRaim:08}, $X$ is a Brownian
motion. The proofs in this paper rely on non-standard analysis. In
\cite{HugMalTrub:12} the conditions are imposed on the diffusion
coefficients $b=b(t,x)$ and $\sigma = \sigma(t,x)$ and on the
transition density $p=p(t,x,s,y)$. In the main body
of~\cite{HugMalTrub:12}, it is assumed that $b$, $\sigma$, and $p$ are
analytic functions with respect to all their arguments.  In the
technical appendix to \cite{HugMalTrub:12}, these functions are
required to be analytic with respect to $t$ and $s$ and $5$-times
($7$-times for $p$) continuously differentiable with respect to $x$
and $y$. In \cite{RiedHerz:12} the diffusion coefficients $b$ and
$\sigma$ do not depend on $t$, the matrix $\sigma$ is invertible and
$b=b(x)$, $\sigma=\sigma(x)$, and $\sigma^{-1} = \sigma^{-1}(x)$ and
bounded and analytic functions.

In one important aspect, the assumptions in \cite{AnderRaim:08} and
\cite{RiedHerz:12} and in the main body of \cite{HugMalTrub:12} are
less restrictive than those in this paper. If $\psi = F(X_1)$, where
$F = F^j(x)$ is a $J$-dimensional vector-function on $\mathbb{R}^d$,
then these papers require the Jacobian matrix of $F$ to have rank $d$
only on some open set. In our framework and also in the setup of the
technical appendix to \cite{HugMalTrub:12}, this property needs to
hold almost everywhere on $\mathbb{R}^d$. We provide an example
showing that in the absence of the $x$-analyticity assumption on
$b=b(t,x)$ and $\sigma=\sigma(t,x)$ this stronger condition cannot be
relaxed.

From the point of view of applications, the most severe constraint of
our setup is the boundedness assumption on the diffusion coefficients.
This condition facilitates references to the results from elliptic
PDEs commonly stated for bounded coefficients; our main source is
\citet*{Kryl:08}. At the same time, it excludes some popular financial
modes such as those driven by an Ornstein-Uhlenbeck process. We leave
the extension to unbounded coefficients for future research.

In \cite{Kram:12b} the results of this paper are used to obtain
criteria for the existence of dynamic Radner equilibrium.

\section{Main results}
\label{sec:main-results}

Let $\mathbf{X}$ be a Banach space with the norm
$\norm{\cdot}_{\mathbf{X}}$.  We shall often use maps
$\map{f}{[0,1]}{\mathbf{X}}$ which are analytic on $(0,1)$ and
H\"older continuous on $[0,1]$, that is, for every $t\in (0,1)$ there
exist a number $\epsilon(t)>0$ and a sequence $(A_n(t))_{n\geq 0}$ in
$\mathbf{X}$ such that
\begin{displaymath}
  f(s) = \sum_{n=0}^\infty A_n(t) (s-t)^n, \quad s\in (0,1),
  \abs{s-t}<\epsilon(t), 
\end{displaymath}
and there are constants $N>0$ and $\delta>0$ such that
\begin{displaymath}
  \norm{f(t)-f(s)} \leq N \abs{t-s}^{\delta}, \quad
  s,t\in [0,1].  
\end{displaymath}
In the statements of our main results, $\mathbf{X}$ is one of the
following spaces:
\begin{description}
\item[$\mathbf{L}_\infty=\mathbf{L}_\infty(\mathbb{R}^d,dx)$\rm{:}]
  the Lebesgue space of bounded real-valued functions $f$ on
  $\mathbb{R}^d$ with the norm $\norm{f}_{\mathbf{L}_\infty} \set
  \esssup_{x\in \mathbb{R}^d} \abs{f(x)}$.
\item[$\mathbf{C}=\mathbf{C}(\mathbb{R}^d)$\rm{:}] the Banach space of
  bounded and continuous real-valued functions $f$ on $\mathbb{R}^d$
  with the norm $\norm{f}_{\mathbf{C}} \set \sup_{x\in \mathbb{R}^d}
  \abs{f(x)}$.
\end{description}

We use standard notations of linear algebra. If $x$ and $y$ are
vectors in $\mathbb{R}^n$, then $\ip{x}{y}$ denotes the scalar product
and $\abs{x} \set \sqrt{\ip{x}{x}}$. If $a\in \mathbb{R}^{m\times n}$
is a matrix with $m$ rows and $n$ columns, then $ax$ denotes its
product on the (column-)vector $x$, $a^*$ stands for the transpose,
and $\abs{a} \set \sqrt{\trace(aa^*)}$.

Let $\mathbb{R}^d$ be an Euclidean space and the functions
$\map{b=b(t,x)}{[0,1]\times\mathbb{R}^d}{\mathbb{R}^d}$ and
$\map{\sigma=\sigma(t,x)}{[0,1]\times
  \mathbb{R}^d}{\mathbb{R}^{d\times d}}$ be such that for all
$i,j=1,\dots,d$:
\begin{enumerate}[label=(A\arabic{*}), ref=(A\arabic{*})]
\item \label{item:1} the maps $t\mapsto b^i(t,\cdot)$ of $[0,1]$ to
  $\mathbf{L}_{\infty}$ and $t\mapsto \sigma^{ij}(t,\cdot)$ of $[0,1]$
  to $\mathbf{C}$ are analytic on $(0,1)$ and H\"older continuous on
  $[0,1]$.  For $t\in [0,1]$ and $x\in \mathbb{R}^d$ the matrix
  $\sigma(t,x)$ has the inverse $\sigma^{-1}(t,x)$ and there exists a
  constant $N>0$, same for all $t$ and $x$, such that
  \begin{equation}
    \label{eq:2}
    \abs{\sigma^{-1}(t,x)} \leq N.
  \end{equation} 
  Moreover, there exists a strictly increasing function $\omega =
  (\omega(\epsilon))_{\epsilon>0}$ such that $\omega(\epsilon)\to 0$
  as $\epsilon\downarrow 0$ and, for all $t\in [0,1]$ and all $x,y\in
  \mathbb{R}^d$,
  \begin{displaymath}
    \abs{\sigma(t,x) - \sigma(t,y)} \leq \omega(\abs{x-y}).
  \end{displaymath}
  \setcounter{item}{\value{enumi}}
\end{enumerate}
Note that~\eqref{eq:2} is equivalent to the uniform ellipticity of the
covariance matrix-function $a\set \sigma\sigma^*$: for all $y\in
\mathbb{R}^d$ and $(t,x)\in [0,1]\times \mathbb{R}^d$,
\begin{displaymath}
  \ip{y}{a(t,x)y} = \abs{\sigma(t,x) y}^2 \geq \frac{1}{N^2}\abs{y}^2.  
\end{displaymath}

Let $X_0\in \mathbb{R}^d$. The classical results of
\citet[Theorem~7.2.1]{StrVarad:06} and \citet{Kryl:69,Kryl:72} imply
that under~\ref{item:1} there exist a complete filtered probability
space $(\Omega, \mathcal{F}_1, \mathbf{F} = (\mathcal{F}_t)_{t\in
  [0,1]}, \mathbb{P})$, a Brownian motion $W$, and a stochastic
process $X$, both with values in $\mathbb{R}^d$, such that
\begin{equation}
  \label{eq:3}
  X_t = X_0+\int_0^t b(s,X_s)ds + \int_0^t\sigma(s,X_s) dW_s, \quad t\in [0,1],  
\end{equation}
and, moreover, all finite dimensional distributions of $X$ are defined
uniquely. In view of~\eqref{eq:2}, we can (and will) assume that the
filtration $\mathbf{F}$ is generated by $X$:
\begin{equation}
  \label{eq:4}
  \mathbf{F} = \mathbf{F}^X \set (\mathcal{F}^X_t)_{t\in [0,1]},
\end{equation}
where, as usual, $\mathcal{F}_t^X$ denotes the $\sigma$-field
generated by $(X_s)_{s\leq t}$ and complemented with $\mathbb{P}$-null
sets. In this case, $\mathbb{P}$ is defined uniquely in the sense that
if $\mathbb{Q}\sim \mathbb{P}$ is an equivalent probability measure on
$(\Omega,\mathcal{F}_1) = (\Omega,\mathcal{F}^X_1)$ such that
\begin{displaymath}
  W_t = \int_0^t \sigma^{-1}(s,X_s) (dX_s - b(s,X_s)ds), \quad t\in [0,1], 
\end{displaymath}
is a Brownian motion under $\mathbb{Q}$, then $\mathbb{Q} =
\mathbb{P}$. Note that the filtration $\mathbf{F}^X$ is (left- and
right-) continuous because every $\mathbf{F}^X$-martingale is
continuous, see Remark~\ref{rem:2}.

\begin{Remark}
  \label{rem:1}
  With respect to $x$, the conditions in \ref{item:1} are,
  essentially, the minimal classical assumptions guaranteeing the
  existence and the uniqueness of the weak solution
  to~\eqref{eq:3}. This weak solution is also well-defined when $b$
  and $\sigma$ are only measurable functions with respect to $t$.  As
  we shall see in Example~\ref{ex:1}, the requirement on $\sigma$ to
  be $t$-analytic is, however, essential for the validity of our main
  Theorem~\ref{th:1}.
\end{Remark}

\begin{Remark}
  \label{rem:2}
  It is well-known that a local martingale $M$ adapted to the
  filtration $\mathbf{F}^W$, generated by the Brownian motion $W$, is
  a stochastic integral with respect to $W$, that is, there exists an
  $\mathbf{F}^W$-predictable process $H$ with values in $\mathbb{R}^d$
  such that
  \begin{equation}
    \label{eq:5}
    M_t = M_0 + \int_0^t H_u dW_u \set M_0 + \sum_{i=1}^d \int_0^t
    H^i_u dW^i_u, \quad t\in [0,1].  
  \end{equation}
  The example in \citet{Barl:82} shows that under~\ref{item:1} the
  filtration $\mathbf{F}^W$ may be strictly smaller than
  $\mathbf{F}=\mathbf{F}^X$. Nevertheless, for a local martingale $M$
  adapted to $\mathbf{F}$ the integral representation~\eqref{eq:5}
  still holds with some $\mathbf{F}$-predictable $H$. This follows
  from the mentioned fact that a probability measure $\mathbb{Q}\sim
  \mathbb{P}$ such that $W$ is a $\mathbb{Q}$-local martingale
  (equivalently, a $\mathbb{Q}$-Brownian motion) coincides with
  $\mathbb{P}$ and the integral representation theorems in
  \citet[Section XI.1(a)]{Jacod:79}.
\end{Remark}

Recall that a locally integrable function $f$ on $(\mathbb{R}^d,dx)$
is \emph{weakly differentiable} if for every index $i=1,\dots,d$ there
is a locally integrable function $g^i$ such that the identity
\begin{displaymath}
  \int_{\mathbb{R}^d} g^i(x)h(x) dx = -
  \int_{\mathbb{R}^d} f(x) \frac{\partial h}{\partial x^i}(x) dx 
\end{displaymath}
holds for every function $h\in \mathbf{C}^\infty$ with compact
support, where $\mathbf{C}^\infty$ is the space of infinitely many
times differentiable functions. In this case, we set $\frac{\partial
  f}{\partial x^i} \set g^i$. The weak derivatives of higher orders
are defined recursively.

Let $J\geq d$ be an integer and the functions
$\map{F^j,G}{\mathbb{R}^d}{\mathbb{R}}$ and
$\map{f^j,g^j,\alpha^j,\beta,\gamma^i}{[0,1]\times\mathbb{R}^d}{\mathbb{R}}$,
$j=1,\dots,J$, $i=1,\dots,d$, be such that for some constant $N>0$
\begin{enumerate}[label=(A\arabic{*}), ref=(A\arabic{*})]
  \setcounter{enumi}{\value{item}}
\item \label{item:2} The functions $F^j$ and $G$ are weakly
  differentiable, $G$ is strictly positive, the Jacobian matrix
  $\left(\frac{\partial F^j}{\partial x^i}\right)_{i=1,\dots,d, \;
    j=1,\dots,J}$ has rank $d$ almost surely under the Lebesgue
  measure on $\mathbb{R}^d$, and
  \begin{displaymath}
    \abs{\frac{\partial F^j}{\partial x^i}} + \abs{\frac{\partial G}{\partial
        x^i}} \leq e^{N(1 + \abs{x})}, \quad x\in \mathbb{R}^d. 
  \end{displaymath}
\item \label{item:3} The maps $t\mapsto e^{-N \abs{\cdot}}
  f^j(t,\cdot) \set \bigl(e^{-N \abs{x}} f^j(t,x)\bigr)_{x\in
    \mathbb{R}^d}$, $t\mapsto e^{-N \abs{\cdot}} g^j(t,\cdot)$ and
  $t\mapsto \alpha^j(t,\cdot)$, $t\mapsto \beta(t,\cdot)$, $t\mapsto
  \gamma^i(t,\cdot)$ of $[0,1]$ to $\mathbf{L}_{\infty}$ are analytic
  on $(0,1)$ and H\"older continuous on $[0,1]$.
  \setcounter{item}{\value{enumi}}
\end{enumerate}
Using these functions we define the random variable
\begin{displaymath}
  \xi \set G(X_1) e^{\int_0^1 \beta(t,X_t)dt},
\end{displaymath}
the equivalent probability measure $\widetilde{\mathbb{P}}$ and the
$\widetilde{\mathbb{P}}$-martingale $Y$ by
\begin{align*}
  \frac{d\widetilde{\mathbb{P}}}{d\mathbb{P}} &\set \exp\left(\int_0^1
    \gamma(s,X_s)dW_s -\frac12
    \int_0^1 \abs{\gamma(s,X_s)}^2 ds\right) \\
  Y_t &\set \widetilde{\mathbb{E}}[\xi|\mathcal{F}_t], \quad t\in
  [0,1],
\end{align*}
and the random variables
\begin{align*}
  \psi^j &\set F^j(X_1) e^{\int_0^1 \alpha^j(t,X_t)dt} + \int_0^1
  f^j(t,X_t) e^{\int_0^t\alpha^j(s,X_s)ds} dt \\
  &\quad + \int_0^1 \frac{g^j(t,X_t)}{Y_t}
  e^{\int_0^t(\alpha^j(s,X_s)+ \beta(s,X_s))ds} dt, \quad j=1,\dots,J.
\end{align*}

This is the main result of the paper.

\begin{Theorem}
  \label{th:1}
  Suppose that~\eqref{eq:4} and \ref{item:1}, \ref{item:2},
  and~\ref{item:3} hold.  Then the equivalent probability measure
  $\mathbb{Q}$ with the density under $\widetilde{\mathbb{P}}$
  \begin{displaymath}
    \frac{d\mathbb{Q}}{d\widetilde{\mathbb{P}}} \set
    \frac{\xi}{\widetilde{\mathbb{E}}[\xi]} = \frac{Y_1}{Y_0},
  \end{displaymath}
  and the $\mathbb{Q}$-martingale
  \begin{align*}
    S_t \set \mathbb{E}^{\mathbb{Q}}[\psi|\mathcal{F}_t], \quad t\in
    [0,1],
  \end{align*}
  with values in $\mathbb{R}^J$ are well-defined and every local
  martingale $M$ under $\mathbb{Q}$ is a stochastic integral with
  respect to $S$, that is, \eqref{eq:1} holds.
\end{Theorem}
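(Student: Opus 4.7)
\emph{Reduction to a rank condition.} The plan is to invoke Remark~\ref{rem:2}: every $\mathbf{F}$-local martingale under $\mathbb{P}$ is a stochastic integral with respect to $W$. Combined with Girsanov's theorem applied to $d\mathbb{Q}/d\mathbb{P}$, this produces a $\mathbb{Q}$-Brownian motion $\widetilde W$ such that every $\mathbf{F}$-local martingale $M$ under $\mathbb{Q}$ has the form $M_t=M_0+\int_0^t K_u\,d\widetilde W_u$ for some predictable $\mathbb{R}^d$-valued $K$. Writing $dS_t=\Sigma_t\,d\widetilde W_t$ with a predictable $J\times d$-matrix $\Sigma$, the sought representation~\eqref{eq:1} reduces to the pointwise linear system $H_t\Sigma_t=K_t$ in $H$; this is solvable for every $K$ precisely when $\Sigma_t$ has rank $d$ for $dt\times d\mathbb{Q}$-a.e.\ $(t,\omega)$. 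It therefore suffices to establish this rank condition.

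\emph{Identifying $\Sigma$ via Feynman--Kac.} Set $\eta_t:=\mathbb{E}[\xi|\mathcal{F}_t]$. The Markov property and~\eqref{eq:7} give $\eta_t=\mathbb{E}[\xi]^{-1}g(t,X_t)\exp(\int_0^t\beta(s,X_s)\,ds)$, where $g$ solves the backward Kolmogorov equation associated with the generator of $X$, potential $\beta$, and terminal value $G$. A short It\^o/Girsanov computation then expresses the $\mathbb{Q}$-dynamics of $X$ as $dX_t=\widetilde b(t,X_t)\,dt+\sigma(t,X_t)\,d\widetilde W_t$, so $X$ remains Markov under $\mathbb{Q}$. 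Splitting $\psi^j$ as in~\eqref{eq:6} and using Markovianity under $\mathbb{Q}$ yields
\begin{displaymath}
  S^j_t=A^j_t+\phi^j_t\,w^j(t,X_t),\qquad A^j_t=\int_0^t\phi^j_s f^j(s,X_s)\,ds,\quad \phi^j_t=e^{\int_0^t\alpha^j(s,X_s)\,ds},
\end{displaymath}
where $w^j$ solves the backward parabolic equation with the $\mathbb{Q}$-generator of $X$, potential $\alpha^j$, source $f^j$, and terminal data $w^j(1,\cdot)=F^j$. Since $A^j$ and $\phi^j$ have finite variation, It\^o's formula gives the martingale part of $S^j$ as $\phi^j_t\,\nabla_x w^j(t,X_t)\,\sigma(t,X_t)\,d\widetilde W_t$. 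Invertibility of $\sigma$ and positivity of $\phi^j$ then show that $\Sigma_t$ has rank $d$ iff the $J\times d$ Jacobian $\nabla_x w(t,X_t)$ does. Because $X_t$ admits a strictly positive Lebesgue density for $t>0$, it suffices to prove that $\nabla_x w(t,x)$ has rank $d$ for Lebesgue-a.e.\ $(t,x)\in[0,1]\times\mathbb{R}^d$.

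\emph{Rank propagation through $t$-analyticity.} Here the $t$-analyticity hypotheses enter. The plan is to show that both $t\mapsto w^j(t,\cdot)$ and $t\mapsto\partial w^j/\partial x^i(t,\cdot)$ are analytic maps of $[0,1]$ into an exponentially weighted $\mathbf{L}_\infty$ space (the natural target dictated by~\ref{item:3}--\ref{item:5}). Granted this, for a.e.\ $x$ each scalar function $t\mapsto(\partial w^j/\partial x^i)(t,x)$ is analytic, and so is every $d\times d$ minor determinant $D_I(t,x):=\det((\partial w^j/\partial x^i)(t,x))_{j\in I,\,i=1,\dots,d}$, with $I$ ranging over $d$-subsets of $\{1,\dots,J\}$. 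At $t=1$, $\nabla_x w(1,\cdot)=\nabla F$ has rank $d$ a.e.\ by~\ref{item:3}, so for a.e.\ $x$ there is some $I(x)$ with $D_{I(x)}(1,x)\neq 0$; consequently $t\mapsto D_{I(x)}(t,x)$ is a non-zero analytic function and vanishes on a Lebesgue-null subset of $[0,1]$. Fubini's theorem then forces $\{(t,x):\text{rank}\,\nabla_x w(t,x)<d\}$ to have Lebesgue measure zero in $[0,1]\times\mathbb{R}^d$, closing the argument.

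\emph{Main obstacle.} The crux is the claimed $t$-analyticity of $w^j$ and of its weak $x$-gradient as maps into the chosen weighted Banach space. Under the minimal $x$-regularity allowed by~\ref{item:1}--\ref{item:2}, classical Cauchy--Kowalevsky does not apply, so I would combine parabolic $\mathbf{L}_p$-estimates of Krylov type with a power-series argument in $t$: expanding $w(t,\cdot)=\sum_k c_k(\cdot)(t-t_0)^k$, each $c_k$ solves a cascade of inhomogeneous parabolic problems whose data are the Taylor coefficients of $b,\sigma,\alpha^j,\beta,f^j$, and one must prove geometric-in-$k$ bounds for $c_k$ and its weak $x$-derivatives in the target norm. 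A secondary technical point is ensuring that weak $x$-differentiation commutes with the $t$-analytic expansion, so that the pointwise-in-$x$ step of the previous paragraph is rigorous in the presence of only weakly differentiable terminal data.
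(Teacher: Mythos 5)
Your architecture matches the paper's: reduce the representation property to a full-rank condition on the diffusion matrix of $S$, identify that matrix via a Feynman--Kac/It\^o computation, and propagate the terminal-time rank condition of~\ref{item:3} backwards in time using $t$-analyticity of the PDE solution plus the fact that a non-zero analytic function on an interval vanishes only on a null set. However, there are two genuine gaps.

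First, the step you yourself flag as the ``main obstacle'' is in fact the core of the paper, and you have not closed it. The $t$-analyticity of $t\mapsto w^j(t,\cdot)$ and of its weak gradient, as maps into a weighted Sobolev space, when the coefficients are only bounded measurable (for $b$) or uniformly continuous (for $\sigma$) in $x$, is exactly what the paper's Section~\ref{sec:time-analyt-solut} establishes (Theorems~\ref{th:7} and~\ref{th:8}), and it does so not by a power-series cascade with geometric bounds but by abstract evolution-equation machinery: the operators $A(t)$ are sectorial on $\mathbf{L}_p$ with domain $\mathbf{W}^2_p$, Kato--Tanabe's theorem gives analyticity of $t\mapsto u(t,\cdot)$ in $\mathbf{L}_p$, Lemma~\ref{lem:2} upgrades this to $\mathbf{W}^2_p$ via analyticity of $A(t)^{-1}$, and maximal-regularity results plus complex interpolation give the H\"older/continuity statements up to the terminal time (needed to connect the analytic behaviour on $[0,1)$ with the rank condition at $t=1$, where the solution is not analytic). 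Your sketch of ``geometric-in-$k$ bounds for $c_k$'' is a proposal for a different proof, not a proof.

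Second, your Feynman--Kac step is run under $\mathbb{Q}$, so the backward equation for $w^j$ involves the $\mathbb{Q}$-generator of $X$, whose drift is $\widetilde b = b + \sigma\sigma^{*}\,\nabla\ln g$ with $g$ the Feynman--Kac solution for $\xi$. There is no reason for $\nabla g/g$ to be bounded or to define an analytic map into $\mathbf{L}_\infty$, so $\widetilde b$ need not satisfy~\ref{item:1} and the PDE result does not apply to your equation for $w^j$. The paper avoids this by solving \emph{both} Cauchy problems under $\mathbb{P}$ --- $u$ with terminal datum $G$ and $v^j$ with terminal datum $F^jG$ and source $uf^j$ (Lemma~\ref{lem:3}) --- and performing the division $S=R/Y$ only at the process level; the relevant matrix then becomes $w^{ij}=u\,\partial v^j/\partial x^i - v^j\,\partial u/\partial x^i = u^2\,\partial (v^j/u)/\partial x^i$, which is your $u^2\nabla_x w^j$ but is obtained without ever solving a PDE with the $\mathbb{Q}$-drift. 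You would need to either adopt this device or prove regularity of $\nabla\ln g$ separately. A minor further omission: the well-definedness of $\mathbb{Q}$ and $S$ requires integrability of $\xi$ and $\xi\psi^j$, which the paper gets from uniform integrability of the martingales $Y$ and $R^j$ via exponential moments of $\sup_t\abs{X_t}$.
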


The proof of Theorem~\ref{th:1} is given in
Section~\ref{sec:proof-theorem} and relies on the study of parabolic
equations in Section~\ref{sec:time-analyt-solut}.

\begin{Remark}
  \label{rem:3}
  The construction of $\mathbb{Q}$ and $S$ is done with a view to
  accommodate applications to financial economics,
  see~\cite{Kram:12b}, and to allow for a use of PDE techniques in the
  proof.  The $t$-analyticity condition on $f^j$ and $g^j$
  in~\ref{item:3} cannot be relaxed even if $X$ is a one-dimensional
  Brownian motion, see Example~\ref{ex:2} below.  By contrast, the
  $x$-regularity assumptions on the functions $F^j$, $G$, $f^j$, and
  $g^j$ in~\ref{item:2} and~\ref{item:3} admit weaker formulations
  where the $\mathbf{L}_\infty$ space can be replaced by appropriate
  $\mathbf{L}_p$ spaces. This generalization leads, however, to
  slightly more delicate and longer proofs and, probably, is not
  interesting for applications.
\end{Remark}

\begin{Remark}
  \label{rem:4}
  We stress that the $t$-analyticity conditions in~\ref{item:1}
  and~\ref{item:3} are stated for \emph{maps} of $(0,1)$ to
  $\mathbf{L}_\infty$ which is more than just boundedness and
  pointwise analyticity. For example, the function $h(t,x) \set
  \sin(tx)$, which is bounded and analytic on $\mathbb{R}\times
  \mathbb{R}$, does not define even a continuous map $t\mapsto
  h(t,\cdot)$ of $(0,1)$ to $\mathbf{L}_\infty$:
  \begin{displaymath}
    \norm{h(t,\cdot) - h(s,\cdot)}_{\mathbf{L}_\infty} \set \sup_{x\in
      \mathbb{R}}\abs{\sin(tx) - \sin(sx)} \geq 1, \quad 
    t\not=s. 
  \end{displaymath}
  The use of maps is essential for our proof of the theorem based on
  analytic semigroups.
\end{Remark}

We conclude with a few counter-examples illustrating the sharpness of
the conditions of the theorem. Our first two examples show that the
time analyticity assumptions on the volatility coefficient
$\sigma=\sigma(t,x)$ and on the functions $f^j = f^j(t,x)$ and $g^j =
g^j(t,x)$ cannot be relaxed. In both cases, we take $b^i(t,x) =
\alpha^j(t,x) = \beta(t,x) = \gamma^i(t,x) = 0$ and $G(x)=1$; in
particular, $\mathbb{Q}=\widetilde{\mathbb{P}} =\mathbb{P}$.

\begin{Example}
  \label{ex:1}
  We show that the assertion of Theorem~\ref{th:1} can fail to hold
  when all its conditions are satisfied except the $t$-analyticity of
  the volatility matrix $\sigma$. In our construction, $d=J=2$ and
  both $\sigma$ and its inverse $\sigma^{-1}$ are
  $\mathbf{C}^\infty$-matrices on $[0,1]\times \mathbb{R}^2$ which are
  bounded with all their derivatives and have analytic restrictions to
  $[0,\frac12)\times \mathbb{R}^2$ and $(\frac12,1]\times
  \mathbb{R}^2$. Moreover, the maps $t\mapsto \sigma^{ij}(t,\cdot)$ of
  $[0,1]$ to $\mathbf{C}(\mathbb{R}^2)$ belong to $\mathbf{C}^\infty$
  and have analytic restrictions to $[0,\frac12)$ and $(\frac12,1]$.

  Let $g=g(t)$ be a $\mathbf{C}^\infty$-function on $[0,1]$ which
  equals $0$ on $[0,\frac12]$ and is analytic and strictly positive on
  $(\frac{1}{2},1]$. Let $h = h(t,y)$ be an analytic function on
  $[0,1]\times \mathbb{R}$ such that $0\leq h\leq 1$, the function
  $h(1,\cdot)$ is non-constant, the map $t\mapsto h(t,\cdot)$ of
  $[0,1]$ to $\mathbf{C}$ is analytic, and
  \begin{displaymath}
    \frac{\partial h}{\partial t} + \frac12 \frac{\partial^2
      h}{\partial y^2} = 0.  
  \end{displaymath}
  For instance, we can take
  \begin{displaymath}
    h(t,y) = \frac12 (1 + e^{\frac{t-1}2}\sin y). 
  \end{displaymath}

  Define a $2$-dimensional diffusion $(X,Y)$ on $[0,1]$ by
  \begin{align*}
    X_t &\set \int_0^t \sqrt{1 + g(s)h(s,Y_s)} dB_s, \\
    Y_t &\set W_t,
  \end{align*}
  where $B$ and $W$ are independent Brownian motions.  Clearly, the
  volatility matrix
  \begin{displaymath}
    \sigma(t,x,y) = 
    \begin{pmatrix}
      \sqrt{1 + g(t)h(t,y)} & 0 \\
      0 & 1
    \end{pmatrix}
  \end{displaymath}
  has the announced properties and coincides with the identity matrix
  for $t\in [0,\frac12]$.

  Define the functions $F = F(x,y)$ and $H=H(x,y)$ on $\mathbb{R}^2$
  as
  \begin{align*}
    F(x,y) &\set x, \\
    H(x,y) &\set x^2 - 1 - h(1,y) \int_0^1 g(t)dt.
  \end{align*}
  As $h(1,\cdot)$ is non-constant and analytic, the set of zeros for
  $\frac{\partial h}{\partial y}(1,\cdot)$ is at most countable.
  Since the determinant of the Jacobian matrix for $(F,H)$ is given by
  \begin{displaymath}
    \frac{\partial F}{\partial x}\frac{\partial H}{\partial y} - 
    \frac{\partial F}{\partial y}\frac{\partial H}{\partial x} 
    =  - \frac{\partial h}{\partial y}(1,y)\int_0^1 g(t)dt,
  \end{displaymath}
  it follows that this Jacobian matrix has full rank almost surely.
  
  Observe now that
  \begin{align*}
    S_t & \set   \mathbb{E}[F(X_1,Y_1)|\mathcal{F}_t] =  X_t, \\
    R_t & \set \mathbb{E}[H(X_1,Y_1)|\mathcal{F}_t] = X^2_t - t -
    h(t,Y_t) \int_0^t g(s)ds,
  \end{align*}
  which can be verified by Ito's formula. As $g(t)=0$ for $t\in
  [0,\frac12]$, it follows that $S_t = B_t$ and $R_t = B^2_t - t$ on
  $[0,\frac12]$. Hence, the Brownian motion $Y=W$ cannot be written as
  a stochastic integral with respect to $(S,R)$.
\end{Example}

\begin{Example}
  \label{ex:2}
  This counter-example shows the necessity of the $t$-analyticity
  assumption on $f^j=f^j(t,x)$ and $g^j = g^j(t,x)$ in~\ref{item:3}.
  Let $h=h(t)$ be a $\mathbf{C}^\infty$-function on $[0,1]$ which
  equals $0$ on $[0,\frac12]$, is analytic on $(\frac12,1]$, and is
  such that $h(1)\not = 0$. For the functions
  \begin{align*}
    F(x) & \set e^{h(1)x}, \\
    f(t,x) & \set -\bigl(h'(t)x + \frac12 h^2(t)\bigr) e^{h(t)x},
  \end{align*}
  the conditions \ref{item:2} and \ref{item:3} hold except the time
  analyticity of the map $t\to e^{-N \abs{\cdot}}f(t,\cdot)$ of
  $[0,1]$ to $\mathbf{L}_{\infty}$. This map belongs instead to
  $\mathbf{C}^\infty$ and has analytic restrictions to $[0,\frac12)$
  and $(\frac12,1]$.

  Take $X$ to be a one-dimensional Brownian motion:
  \begin{displaymath}
    X_t \set W_t, \quad t\in [0,1],  
  \end{displaymath} 
  and observe that, by Ito's formula,
  \begin{displaymath}
    S_t \set  \mathbb{E}[\psi|\mathcal{F}_t] =  e^{h(t)W_t} -
    \int_0^t\bigl(h'(s)W_s +  
    \frac12 h^2(s)\bigr) e^{h(s)W_s} ds, 
  \end{displaymath}
  where
  \begin{displaymath}
    \psi = F(X_1) + \int_0^1 f(t,X_t)dt.
  \end{displaymath}
  For $t\in [0,\frac12]$ we have $h(t)=0$ and, therefore, $S_t =
  1$. Hence, a local martingale $M$ which is non-constant on
  $[0,\frac12]$ cannot be a stochastic integral with respect to $S$.
\end{Example}

When the diffusion coefficients $\sigma^{ij}$ and $b^i$ and the
functions $f^j$, $g^j$, $\alpha^j$, $\beta$, and $\gamma^i$
in~\ref{item:3} are also analytic with respect to the state variable
$x$, the results in \cite{HugMalTrub:12} and \cite{RiedHerz:12} show
that in \ref{item:2} it is sufficient to require the Jacobian matrix
of $F = F(x)$ to have rank $d$ only on an open set. The following
example shows that in the case of $\mathbf{C}^\infty$ functions this
simplification is not possible anymore.

\begin{Example}
  \label{ex:3}
  Let $d=J = 2$ and let $\map{h}{\mathbb{R}}{\mathbb{R}}$ be a
  $\mathbf{C}^\infty$ function such that $h(x)= 0$ for $x \leq 0$,
  while $h^\prime(x)>0$ and $h''(x)$ is bounded for $x>0$. For
  instance, take $h(x) = \ind{x>0} e^{-1/x}$.

  Define the diffusion processes $X$ and $Y$ on $[0,1]$ by
  \begin{align*}
    X_t & = B_t,\\
    Y_t & = \int_0^t h^{\prime\prime}(X_s)ds + W_t,
  \end{align*}
  where $B$ and $W$ are independent Brownian motions. Clearly, the
  diffusion coefficients of $(X,Y)$ satisfy~\ref{item:1}.

  Define the functions $F = F(x,y)$ and $H=H(x,y)$ on $\mathbb{R}^2$
  as
  \begin{align*}
    F(x,y) & = y,\\
    H(x,y) & = y - 2h(x),
  \end{align*}
  and the function $f=f(t,x,y)$ on $[0,1]\times \mathbb{R}^2$ as
  \begin{displaymath}
    f(t,x,y) = -h^{\prime\prime}(x).
  \end{displaymath}
  Observe that the determinant of the Jacobian matrix for $(F,H)$ is
  given by
  \begin{displaymath}
    \frac{\partial F}{\partial x}\frac{\partial H}{\partial y} - 
    \frac{\partial F}{\partial y}\frac{\partial H}{\partial x} 
    =  2 h^{\prime}(x),
  \end{displaymath}
  and, hence, this Jacobian matrix has full rank on the set
  $(0,\infty)\times \mathbb{R}$.

  A simple application of Ito's formula yields
  \begin{align*}
    S_t &\set \mathbb{E}[F(X_1,Y_1) + \int_0^1 f(s,X_s,Y_s)ds
    |\mathcal{F}_t] = W_t, \\
    R_t &\set \mathbb{E}[H(X_1,Y_1) |\mathcal{F}_t]= W_t - 2\int_0^t
    h'(X_s) dB_s.
  \end{align*}
  Hence, any martingale in the form
  \begin{equation*}
    M_t = \int_0^t \phi(X_s) dB_s,
  \end{equation*}
  where the function $\phi=\phi(x)$ is different from zero for $x\leq
  0$, cannot be written as a stochastic integral with respect to $(S,
  R)$.
\end{Example}

\section{A time analytic solution of a parabolic equation}
\label{sec:time-analyt-solut}

The proof of Theorem~\ref{th:1} relies on the study of a parabolic
equation in Theorem~\ref{th:3} below.

For reader's convenience, recall the definition of the classical
Sobolev spaces $\mathbf{W}^{m}_p$ on $\mathbb{R}^d$ where $m\in
\braces{0,1,\dots}$ and $p\geq 1$.  When $m=0$ we get the classical
Lebesgue spaces $\mathbf{L}_p=\mathbf{L}_p(\mathbb{R}^d,dx)$ of
real-valued functions $f$ on $\mathbb{R}^d$ with the norm
\begin{displaymath}
  \norm{f}_{\mathbf{L}_p} \set \left(\int_{\mathbb{R}^d}
    \abs{f(x)}^p dx\right)^{\frac{1}{p}}.
\end{displaymath}
When $m\in \braces{1,\dots}$ the Sobolev space $\mathbf{W}^{m}_p$
consists of all $m$-times weakly differentiable functions $f$ such
that
\begin{displaymath}
  \norm{f}_{\mathbf{W}^{m}_p} \set \norm{f}_{\mathbf{L}_{p}} +
  \sum_{1\leq \abs{\alpha} \leq m}
  \norm{D^{\alpha}f}_{\mathbf{L}_{p}} <\infty 
\end{displaymath}
and is a Banach space with such a norm. The summation is taken with
respect to multi-indexes $\alpha = (\alpha_1,\dots,\alpha_d)$ of
non-negative integers, $\abs{\alpha} \set \sum_{i=1}^d \alpha_i$ and
\begin{displaymath}
  D^\alpha \set 
  \frac{\partial^{|\alpha|}}{\partial x_1^{\alpha_1}\dots \partial 
    x_d^{\alpha_d}}. 
\end{displaymath}

For $t\in [0,1]$ and $x\in\mathbb{R}^d$ consider an elliptic operator
\begin{displaymath}
  A(t) \set \sum_{i,j=1}^d a^{ij}(t,x) \frac{\partial^2}{\partial
    x^i \partial x^j} + \sum_{i=1}^d b^i(t,x) \frac{\partial}{\partial  
    x^i} + c(t,x),
\end{displaymath}
where $a^{ij}$, $b^i$, and $c$ are measurable functions on
$[0,1]\times \mathbb{R}^d$ such that
\begin{enumerate}[label=(B\arabic{*}), ref=(B\arabic{*})]
\item \label{item:4} the maps $t\mapsto a^{ij}(t,\cdot)$ of $[0,1]$ to
  $\mathbf{C}$ and $t\mapsto b^{i}(t,\cdot)$ and $t\mapsto c(t,\cdot)$
  of $[0,1]$ to $\mathbf{L}_\infty$ are analytic on $(0,1)$ and
  H\"older continuous on $[0,1]$. The matrix $a$ is symmetric: $a^{ij}
  = a^{ji}$, uniformly elliptic: there exists $N>0$ such that
  \begin{displaymath}
    \ip{y}{a(t,x)y}  \geq \frac1{N^2} \abs{y}^2, \quad
    (t,x)\in [0,1]\times \mathbb{R}^d, \quad y\in \mathbb{R}^d,
  \end{displaymath}
  and uniformly continuous with respect to $x$: there exists a
  decreasing function $\omega = (\omega(\epsilon))_{\epsilon>0}$ such
  that $\omega(\epsilon)\to 0$ as $\epsilon\downarrow 0$ and for all
  $t\in [0,1]$ and $y,z\in \mathbb{R}^d$
  \begin{displaymath}
    \abs{a^{ij}(t,y) - a^{ij}(t,z)} \leq
    \omega(\abs{y-z}). 
  \end{displaymath}
  \setcounter{item}{\value{enumi}}
\end{enumerate}
Let $g=\map{g(x)}{\mathbb{R}^d}{\mathbb{R}}$ and
$f=\map{f(t,x)}{[0,1]\times \mathbb{R}^d}{\mathbb{R}}$ be measurable
functions such that for some $p>1$
\begin{enumerate}[label=(B\arabic{*}), ref=(B\arabic{*})]
  \setcounter{enumi}{\value{item}}
\item \label{item:5} the function $g$ belongs to $\mathbf{W}^1_p$ and
  the map $t\mapsto f(t,\cdot)$ from $[0,1]$ to $\mathbf{L}_{p}$ is
  analytic on $(0,1)$ and H\"older continuous on $[0,1]$.
  \setcounter{item}{\value{enumi}}
\end{enumerate}

\begin{Theorem}
  \label{th:2}
  Let $p>1$ and suppose the conditions~\ref{item:4} and~\ref{item:5}
  hold. Then there exists a unique measurable function $u=u(t,x)$ on
  $[0,1]\times \mathbb{R}^d$ such that
  \begin{enumerate}
  \item \label{item:6} $t\mapsto u(t,\cdot)$ is a H\"older continuous
    map of $[0,1]$ to $\mathbf{L}_p$ whose restriction on $(0,1]$ is
    continuously differentiable,
  \item \label{item:7} $t\mapsto u(t,\cdot)$ is a continuous map of
    $[0,1]$ to $\mathbf{W}^{1}_p$,
  \item \label{item:8} $t\mapsto u(t,\cdot)$ is a continuous map of
    $(0,1]$ to $\mathbf{W}^{2}_p$ whose restriction on $(0,1)$ is
    analytic,
  \end{enumerate}
  and such that $u=u(t,x)$ solves the parabolic equation:
  \begin{align}
    \label{eq:6}
    \frac{\partial u}{\partial t} &= A(t)u + f, \quad t\in (0,1], \\
    \label{eq:7}
    u(0,\cdot) &= g.
  \end{align}
\end{Theorem}

The proof relies on results from the theory of analytic semigroups
where our main references are \citet{Pazy:83} and \citet{Yagi:10}. We
first introduce some notations and state a few lemmas.

Let $\mathbf{X}$ and $\mathbf{D}$ be Banach spaces.  By
$\mathcal{L}(\mathbf{X},\mathbf{D})$ we denote the Banach space of
bounded linear operators $\map{T}{\mathbf{X}}{\mathbf{D}}$ endowed
with the operator norm. A shorter notation $\mathcal{L}(\mathbf{X})$
is used for $\mathcal{L}(\mathbf{X},\mathbf{X})$. We shall write
$\mathbf{D}\subset \mathbf{X}$ if $\mathbf{D}$ is \emph{continuously
  embedded} into $\mathbf{X}$, that is, the elements of $\mathbf{D}$
form a subset of $\mathbf{X}$ and there is a constant $N>0$ such that
$\norm{x}_{\mathbf{X}} \leq N\norm{x}_{\mathbf{D}}$, $x\in
\mathbf{D}$. We shall write $\mathbf{D}= \mathbf{X}$ if
$\mathbf{D}\subset \mathbf{X}$ and $\mathbf{X}\subset \mathbf{D}$.

Let $\mathbf{D}\subset \mathbf{X}$. A Banach space $\mathbf{E}$ is
called an \emph{interpolation space} between $\mathbf{D}$ and
$\mathbf{X}$ if $\mathbf{D}\subset \mathbf{E}\subset \mathbf{X}$ and
any linear operator $T\in \mathcal{L}(\mathbf{X})$ whose restriction
to $\mathbf{D}$ belongs to $\mathcal{L}(\mathbf{D})$ also has its
restriction to $\mathbf{E}$ in $\mathcal{L}(\mathbf{E})$; see
\citet[Section 2.4]{BergLofs:76}.

The following lemma is used in the proof of item~\ref{item:7} of the
theorem.

\begin{Lemma}
  \label{lem:1}
  Let $\mathbf{D}$, $\mathbf{E}$, and $\mathbf{X}$ be Banach spaces
  such that $\mathbf{D}\subset \mathbf{X}$, $\mathbf{E}$ is an
  interpolation space between $\mathbf{D}$ and $\mathbf{X}$, and
  $\mathbf{D}$ is dense in $\mathbf{E}$. Let $(T_n)_{n\geq 1}$ be a
  sequence of linear operators in $\mathcal{L}(\mathbf{X})$ such that
  $\lim_{n\to \infty} \norm{T_n x}_{\mathbf{X}} = 0$ for every $x\in
  \mathbf{X}$ and $\lim_{n\to \infty} \norm{T_n x}_{\mathbf{D}} = 0$
  for every $x\in \mathbf{D}$.  Then $\lim_{n\to \infty} \norm{T_n
    x}_{\mathbf{E}} = 0$ for every $x\in \mathbf{E}$.
\end{Lemma}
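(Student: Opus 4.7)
The plan is to prove the result by an $\epsilon$-approximation from $\mathbf{D}$ to $\mathbf{E}$. For this I need two ingredients: uniform boundedness of $(T_n)$ in $\mathcal{L}(\mathbf{E})$, and the standard density argument. The uniform boundedness in turn requires converting the qualitative interpolation hypothesis into an operator-norm estimate, which I would do via a closed graph argument.

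First, I apply the Banach--Steinhaus theorem twice: pointwise convergence of $(T_n)$ on $\mathbf{X}$ gives $M_1 \set \sup_n \norm{T_n}_{\mathcal{L}(\mathbf{X})} < \infty$, and pointwise convergence on $\mathbf{D}$ gives $M_2 \set \sup_n \norm{T_n}_{\mathcal{L}(\mathbf{D})} < \infty$. To transfer these bounds to $\mathbf{E}$, I introduce the space $\mathcal{A}$ of operators $T\in \mathcal{L}(\mathbf{X})$ whose restriction to $\mathbf{D}$ lies in $\mathcal{L}(\mathbf{D})$, endowed with the norm $\norm{T}_\mathcal{A} \set \norm{T}_{\mathcal{L}(\mathbf{X})} + \norm{T|_\mathbf{D}}_{\mathcal{L}(\mathbf{D})}$; completeness of $\mathcal{A}$ is a direct check using $\mathbf{D}\subset \mathbf{X}$. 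The interpolation property makes the restriction map $\Phi\colon \mathcal{A} \to \mathcal{L}(\mathbf{E})$, $T \mapsto T|_\mathbf{E}$, well-defined and linear. I would verify that $\Phi$ has closed graph: if $T_k \to T$ in $\mathcal{A}$ and $\Phi(T_k) \to S$ in $\mathcal{L}(\mathbf{E})$, then for $x\in \mathbf{D}$ the convergence $T_k x \to Tx$ in $\mathbf{D}$ combined with $\mathbf{D}\subset \mathbf{E}$ yields $T_k x \to Tx$ in $\mathbf{E}$, while simultaneously $T_k x \to Sx$ in $\mathbf{E}$, so $Tx = Sx$ on $\mathbf{D}$; density of $\mathbf{D}$ in $\mathbf{E}$ then forces $T|_\mathbf{E} = S$. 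By the closed graph theorem $\Phi$ is bounded, say with norm $C$, whence $M \set \sup_n \norm{T_n}_{\mathcal{L}(\mathbf{E})} \leq C(M_1 + M_2) < \infty$.

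With the uniform bound $M$ in hand, I conclude by approximation. Fix $x\in \mathbf{E}$ and $\epsilon > 0$, choose $y \in \mathbf{D}$ with $\norm{x-y}_\mathbf{E} < \epsilon$, and let $C'$ be the constant of the continuous embedding $\mathbf{D} \subset \mathbf{E}$. Then
\begin{displaymath}
  \norm{T_n x}_\mathbf{E} \leq \norm{T_n(x-y)}_\mathbf{E} + \norm{T_n y}_\mathbf{E} \leq M \epsilon + C' \norm{T_n y}_\mathbf{D},
\end{displaymath}
and since $\norm{T_n y}_\mathbf{D} \to 0$ by hypothesis, $\limsup_n \norm{T_n x}_\mathbf{E} \leq M \epsilon$. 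Letting $\epsilon \downarrow 0$ finishes the argument. The main obstacle is precisely the closed graph step, because the interpolation hypothesis as stated supplies no a priori quantitative operator-norm control; once that uniform $\mathcal{L}(\mathbf{E})$ bound is secured, the density of $\mathbf{D}$ in $\mathbf{E}$ does the rest in a routine fashion.
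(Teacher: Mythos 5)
Your proof is correct and follows essentially the same route as the paper: uniform boundedness of $(T_n)$ in $\mathcal{L}(\mathbf{X})$ and $\mathcal{L}(\mathbf{D})$ via Banach--Steinhaus, a uniform bound in $\mathcal{L}(\mathbf{E})$ from the interpolation property, and then the density of $\mathbf{D}$ in $\mathbf{E}$. The only difference is that where the paper cites the fact that $\mathbf{E}$ is automatically a \emph{uniform} interpolation space (Theorem 2.4.2 in Bergh--L\"ofstr\"om), you reprove that step by hand with a closed graph argument; your version is sound (using density of $\mathbf{D}$ in $\mathbf{E}$ to close the graph, where the textbook proof uses $\mathbf{E}\subset\mathbf{X}$ instead), so this is a self-contained substitute for the citation rather than a different method.
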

\begin{proof}
  The uniform boundedness theorem implies that the sequence
  $(T_n)_{n\geq 1}$ is bounded both in $\mathcal{L}(\mathbf{X})$ and
  $\mathcal{L}(\mathbf{D})$.  Due to the Banach property, $\mathbf{E}$
  is a uniform interpolation space between $\mathbf{D}$ and
  $\mathbf{X}$, that is, there is a constant $M>0$ such that
  \begin{displaymath}
    \norm{T}_{\mathcal{L}(\mathbf{E})} \leq M
    \max(\norm{T}_{\mathcal{L}(\mathbf{X})},
    \norm{T}_{\mathcal{L}(\mathbf{D})}) \mtext{for any} T\in
    \mathcal{L}(\mathbf{X}) \cap \mathcal{L}(\mathbf{D});
  \end{displaymath}
  see Theorem 2.4.2 in \cite{BergLofs:76}. Hence, $(T_n)_{n\geq 1}$ is
  also bounded in $\mathcal{L}(\mathbf{E})$. The density of
  $\mathbf{D}$ in $\mathbf{E}$ then yields the result.
\end{proof}

Let $A$ be an (unbounded) closed linear operator on $\mathbf{X}$. We
denote by $\mathbf{D}(A)$ the domain of $A$ and assume that it is
endowed with the graph norm of $A$:
\begin{displaymath}
  \norm{x}_{\mathbf{D}(A)} \set \norm{Ax}_{\mathbf{X}} +
  \norm{x}_{\mathbf{X}}.
\end{displaymath}
Then $\mathbf{D}(A)$ is a Banach space.  Recall that the {resolvent
  set} $\rho(A)$ of $A$ consists of complex numbers $\lambda$ for
which the operator $\map{\lambda I - A}{\mathbf{D}(A)}{\mathbf{X}}$,
where $I$ is the identity operator, is invertible; the inverse
operator is called the resolvent and is denoted by $R(\lambda,A)$. The
bounded inverse theorem implies that $R(\lambda,A)\in
\mathcal{L}(\mathbf{X},\mathbf{D}(A))$ and, in particular,
$R(\lambda,A)\in \mathcal{L}(\mathbf{X})$.

The operator $A$ is called \emph{sectorial} if there are constants $M
> 0$, $r\in\mathbb{R}$, and $\theta\in \left(0,\frac{\pi}{2}\right)$
such that the sector
\begin{equation}
  \label{eq:8}
  S_{r,\theta}\set \descr{\lambda\in\mathbb{C}}{\lambda \neq r
    \mtext{and} \abs{\arg(\lambda - r)} \leq \pi - \theta}
\end{equation}
of the complex plane $\mathbb{C}$ is a subset of $\rho(A)$ and
\begin{equation}
  \label{eq:9}
  \norm{R(\lambda, A)}_{\mathcal{L}(\mathbf{X})}
  \leq \frac{M}{1 + \abs{\lambda}},\quad \lambda\in S_{r,\theta}.  
\end{equation}
The set of such sectorial operators will be denoted by
$\mathcal{S}(M,r,\theta)$. Sectorial operators whose domains are dense
in $\mathbf{X}$ coincide with the generators of analytic semi-groups,
see \citet[Section 2.5]{Pazy:83}.

The following two lemmas are used in the proof of item~\ref{item:8} of
the theorem.

\begin{Lemma}
  \label{lem:2}
  Let $\mathbf{X}$ and $\mathbf{D}$ be Banach spaces such that
  $\mathbf{D}\subset \mathbf{X}$ and let $A = (A(t))_{t\in [0,1]}$ be
  closed linear operators on $\mathbf{X}$ such that $\mathbf{D}(A(t))
  = \mathbf{D}$ for all $t\in[0,1]$. Suppose
  $\map{A}{[0,1]}{\mathcal{L}(\mathbf{D},\mathbf{X})}$ is a continuous
  map, and there are $M > 0$, $r < 0$, and
  $\theta\in\left(0,\frac{\pi}{2}\right)$ such that
  $A(t)\in\mathcal{S}(M, r, \theta)$ for all $t\in[0,1]$.

  Then for every $\lambda \in S_{r,\theta}$ the map $t\mapsto
  R(\lambda,A(t))$ of $[0,1]$ to $\mathcal{L}(\mathbf{X},\mathbf{D})$
  is continuous and there is $N>0$ such that
  \begin{equation}
    \label{eq:10}
    \norm{R(\lambda,A(t))}_{\mathcal{L}(\mathbf{X}, \mathbf{D})} \leq
    N, \quad \lambda\in  S_{r,\theta}, \; t\in [0,1]. 
  \end{equation}
\end{Lemma}
\begin{proof}
  If $A\in \mathcal{S}(M,r,\theta)$, then for $\lambda \in
  S_{r,\theta}$
  \begin{equation}
    \label{eq:11}
    \norm{R(\lambda,A)}_{\mathcal{L}(\mathbf{X},\mathbf{D}(A))} \leq
    \norm{R(\lambda,A)}_{\mathcal{L}(\mathbf{X})} +
    \norm{AR(\lambda,A)}_{\mathcal{L}(\mathbf{X})} \leq M + 1,
  \end{equation}
  where we used~\eqref{eq:9} and the identity $AR(\lambda,A) = \lambda
  R(\lambda,A) - I$. As
  $\map{A}{[0,1]}{\mathcal{L}(\mathbf{D},\mathbf{X})}$ is a continuous
  function and $\mathbf{D}(A(t)) = \mathbf{D}$, the Banach spaces
  $\mathbf{D}$ and $\mathbf{D}(A(t))$, $t\in [0,1]$, are uniformly
  equivalent, that is, there is $L>0$ such that
  \begin{equation}
    \label{eq:12}
    \frac1L \norm{x}_{\mathbf{D}(A(t))} \leq \norm{x}_{\mathbf{D}}
    \leq L \norm{x}_{\mathbf{D}(A(t))}, \quad t\in [0,1],\; x\in
    \mathbf{D}. 
  \end{equation}
  From~\eqref{eq:11} and~\eqref{eq:12} we obtain~\eqref{eq:10} with
  $N=L(M+1)$.  As, for $s,t\in [0,1]$ and $\lambda \in S_{r,\theta}$,
  \begin{displaymath}
    R(\lambda,A(t)) - R(\lambda,A(s)) =
    R(\lambda,A(t))(A(t)-A(s))R(\lambda,A(s)), 
  \end{displaymath}
  we deduce from~\eqref{eq:10} that
  \begin{displaymath}
    \norm{R(\lambda,A(t)) -
      R(\lambda,A(s))}_{\mathcal{L}(\mathbf{X},\mathbf{D})} 
    \leq
    N^2 \norm{(A(t)-A(s))}_{\mathcal{L}(\mathbf{D},\mathbf{X})}.
  \end{displaymath}
  The desired continuity of $t\mapsto R(\lambda,A(t))$ in
  $\mathcal{L}(\mathbf{X},\mathbf{D})$ follows now from the continuity
  of $t\mapsto {A}(t)$ in $\mathcal{L}(\mathbf{D},\mathbf{X})$.
\end{proof}

\begin{Lemma}
  \label{lem:3}
  Let $\mathbf{X}$ and $\mathbf{D}$ be Banach spaces such that
  $\mathbf{D}\subset \mathbf{X}$ and let $A = (A(t))_{t\in [0,1]}$ be
  closed linear operators on $\mathbf{X}$ such that $\mathbf{D}(A(t))
  = \mathbf{D}$ for all $t\in[0,1]$. Suppose
  $\map{A}{[0,1]}{\mathcal{L}(\mathbf{D},\mathbf{X})}$ is an analytic
  map, and there are $M > 0$, $r < 0$, and
  $\theta\in\left(0,\frac{\pi}{2}\right)$ such that
  $A(t)\in\mathcal{S}(M, r, \theta)$ for all $t\in[0,1]$.

  Then there exist a convex open set $U$ in $\mathbb{C}$ containing
  $[0,1]$ and an analytic extension of $A$ to $U$ such that
  $A(z)\in\mathcal{S}(2M, r,\theta)$ for all $z\in U$ and the function
  $\map{A^{-1}}{[0,1]}{\mathcal{L}(\mathbf{X},\mathbf{D})}$ is
  analytic.
\end{Lemma}
\begin{proof}
  Since $r<0$, the operator $A(t)$ is invertible for every $t\in
  [0,1]$. As $\map{A}{[0,1]}{\mathcal{L}(\mathbf{D},\mathbf{X})}$ is
  analytic, the inverse function
  $\map{B=A^{-1}}{[0,1]}{\mathcal{L}(\mathbf{X},\mathbf{D})}$ is
  well-defined and analytic. Clearly, there is an open convex set $U$
  in $\mathbb{C}$ containing $[0,1]$ on which both $A$ and $B$ can be
  analytically extended. Then $B = A^{-1}$ on $U$, as $AB$ is an
  analytic function on $U$ with values in $\mathcal{L}(\mathbf{X})$
  which on $[0,1]$ equals the identity operator. By Lemma~\ref{lem:2}
  there is a constant $N>0$ such that the inequality~\eqref{eq:10}
  holds. Of course, we can choose $U$ so that for every $z\in U$ there
  is $t\in [0,1]$ such that
  \begin{equation}
    \label{eq:13}
    \norm{A(z) - A(t)}_{\mathcal{L}(\mathbf{D},\mathbf{X})} \leq
    \frac1{2N}.
  \end{equation}

  Fix $\lambda \in S_{r,\theta}$ and take $t\in [0,1]$ and $z\in U$
  satisfying~\eqref{eq:13}. Then
  \begin{displaymath}
    \norm{(A(z) - A(t))R(\lambda,A(t))}_{\mathcal{L}(\mathbf{X})} \leq
    % \norm{A(z) - A(t)}_{\mathcal{L}(\mathbf{D},\mathbf{X})}
    % \norm{R(\lambda,A(t))}_{\mathcal{L}(\mathbf{X}, \mathbf{D})}
    % \leq
    \frac1{2} 
  \end{displaymath}
  and, hence, the operator $I - (A(z) - A(t))R(\lambda,A(t))$ in
  $\mathcal{L}(\mathbf{X})$ is invertible and the norm of its inverse
  is not greater than $2$. Since
  \begin{displaymath}
    \lambda I - A(z) = (I -  (A(z) - A(t))R(\lambda,A(t)))(\lambda I - A(t)),
  \end{displaymath}
  we deduce that the resolvent $R(\lambda,A(z))$ is well-defined and
  \begin{displaymath}
    \norm{R(\lambda,A(z))}_{\mathcal{L}(\mathbf{X})} \leq
    \frac{2M}{1+\abs{\lambda}}.  
  \end{displaymath}
  This completes the proof.
\end{proof}

\begin{proof}[Proof of Theorem~\ref{th:2}]
  Under \ref{item:4} for every $t\in [0,1]$ the operator $A(t)$ is
  closed in $\mathbf{L}_p$ and has $\mathbf{W}^2_p$ as its domain:
  \begin{equation}
    \label{eq:14}
    \mathbf{D}(A(t)) = \mathbf{W}^2_p. 
  \end{equation}
  Moreover, the operators $(A(t))_{t\in [0,1]}$ are \emph{sectorial}
  with the same constants $M > 0$, $r\in\mathbb{R}$, and $\theta\in
  \left(0,\frac{\pi}{2}\right)$:
  \begin{equation}
    \label{eq:15}
    A(t) \in \mathcal{S}(M,r,\theta), \quad t\in [0,1]. 
  \end{equation}
  These results can be found, for example, in \citet{Kryl:08}, see
  Section 13.4 and Exercise 13.5.1.

  To insure the existence of fractional powers for the operators
  $-A(t)$ it is convenient for us to assume that the sector
  $S_{r,\theta}$ defined in~\eqref{eq:8} contains $0$ or,
  equivalently, that $r<0$.  This condition does not restrict any
  generality as for $s\in \mathbb{R}$ the substitution $u(t,x) \to
  e^{st}u(t,x)$ in~\eqref{eq:6} corresponds to the shift $A(t) \to
  A(t) + s$ in the operators $A(t)$.

  From~\ref{item:4} we deduce the existence of $M>0$ and $\delta>0$
  such that for $v\in \mathbf{W}^2_p$
  \begin{equation}
    \label{eq:16}
    \norm{(A(t) - A(s))v}_{\mathbf{L}_p} \leq M \abs{t-s}^\delta
    \norm{v}_{\mathbf{W}^2_p}, \quad s,t\in [0,1]. 
  \end{equation}
  Conditions~\eqref{eq:14}, \eqref{eq:15}, and \eqref{eq:16} for the
  operators $A=A(t)$, the H\"older continuity of $f=f(t)$
  in~\ref{item:5}, and the fact that $g\in \mathbf{L}_p$ imply the
  existence and uniqueness of the {classical} solution $u=u(t,x)$ to
  the initial value problem \eqref{eq:6}--\eqref{eq:7} in
  $\mathbf{L}_p$; see Theorem 7.1 in Section 5.7 of \cite{Pazy:83} or
  Theorem~3.9 in Section 3.6 of \cite{Yagi:10}. Recall that $u=u(t,x)$
  is the \emph{classical solution} to \eqref{eq:6} and \eqref{eq:7} if
  $u(t,\cdot)\in \mathbf{W}^2_p$ for $t\in (0,1]$, the map $t\mapsto
  u(t,\cdot)$ of $[0,1]$ to $\mathbf{L}_p$ is continuous, the
  restriction of this map to $(0,1]$ is continuously differentiable,
  and the equations \eqref{eq:6} and \eqref{eq:7} hold.

  The continuity of the map $t\mapsto u(t,\cdot)$ of $(0,1]$ to
  $\mathbf{W}^2_p$ follows from the continuity of the map $t\mapsto
  A(t)u(t,\cdot) = \frac{\partial u}{\partial t}(t,\cdot) -
  f(t,\cdot)$ of $(0,1]$ to $\mathbf{L}_p$ and the continuity of the
  map $t\mapsto {A^{-1}(t)}$ of $[0,1]$ to
  $\mathcal{L}(\mathbf{L}_p,\mathbf{W}^2_p)$, which holds by
  Lemma~\ref{lem:2}.

  To verify item~\ref{item:6} we still need to check the H\"older
  continuity of the map $t\mapsto u(t,\cdot)$ of $[0,1]$ to
  $\mathbf{L}_p$. We use Theorem~3.10 in Section~3.8.2 of
  \citet{Yagi:10} dealing with {maximal regularity} of solutions to
  evolution equations. This theorem implies the existence of constants
  $\delta>0$ and $M>0$ such that
  \begin{equation}
    \label{eq:17}
    \norm{\frac{\partial u}{\partial t}(t,\cdot)}_{\mathbf{L}_p} \leq M
    t^{\delta-1}, \quad t\in (0,1], 
  \end{equation}
  provided that the operators $A=A(t)$ satisfy \eqref{eq:14},
  \eqref{eq:15}, and~\eqref{eq:16}, the function $f$ is H\"older
  continuous as in~\ref{item:5}, and
  \begin{equation}
    \label{eq:18}
    g \in \mathbf{D}((-A(0))^\gamma) \quad \text{for some} \quad \gamma>0, 
  \end{equation}
  where $\mathbf{D}((-A(0))^\gamma)$ is the domain of the fractional
  power $\gamma$ of the operator $- A(0)$ acting in
  $\mathbf{L}_p$. The inequality~\eqref{eq:17} clearly implies the
  H\"older continuity of $\map{u(t,\cdot)}{[0,1]}{\mathbf{L}_p}$ and,
  hence, to complete the proof of item~\ref{item:6} we only need to
  verify~\eqref{eq:18}.

  Since $g\in \mathbf{W}^1_p$, we obtain~\eqref{eq:18} if
  \begin{equation}
    \label{eq:19}
    \mathbf{W}^1_p \subset \mathbf{D}((- A(0))^\gamma), \quad
    \gamma \in (0,\frac12). 
  \end{equation}
  Denote by $\Delta \set \sum_{i} \frac{\partial^2}{\partial x_i^2}$
  the Laplace operator and recall that $1-\Delta$ is a sectorial
  operator and
  \begin{displaymath}
    \mathbf{W}^1_p = \mathbf{D}((1-\Delta)^{1/2}), \quad
    \mathbf{D}(-A(0)) = \mathbf{W}^2_p = \mathbf{D}((1-\Delta)),
  \end{displaymath}
  see e.g. \cite[Theorem 13.3.12]{Kryl:08}.  The
  embedding~\eqref{eq:19} now follows from the fact that for constants
  $0<\alpha<\beta<1$ and sectorial operators $A$ and $B$ such that
  $\mathbf{D}(B)\subset \mathbf{D}(A)$ and such that the fractional
  powers $(-A)^\alpha$ and $(-B)^\beta$ are well-defined we have
  $\mathbf{D}((-B)^{\beta}) \subset \mathbf{D}((-A)^{\alpha})$, see
  \cite[Theorem 2.25]{Yagi:10}. This finishes the proof of
  item~\ref{item:6}.

  Another consequence of the maximal regularity properties of $u$
  given in \cite[Theorem 3.10]{Yagi:10} is that the map
  $\map{u(t,\cdot)}{[0,1]}{\mathbf{W}^2_p}$ is continuous if $g \in
  \mathbf{W}^2_p = \mathbf{D}(A(0))$. We shall apply this result
  shortly to prove item~\ref{item:7}.

  For $t\in [0,1]$ define a linear operator $T(t)$ on $\mathbf{L}_p$
  such that for $h\in \mathbf{L}_p$ the function $v = v(t,x)$ given by
  $v(t,\cdot) = T(t)h$ is the unique classical solution in
  $\mathbf{L}_p$ of the homogeneous problem:
  \begin{equation}
    \label{eq:20}
    \frac{\partial v}{\partial t} = A(t)v, \quad v(0,\cdot) = h.
  \end{equation}
  Actually, $T(t) = U(t,0)$, where $U = (U(t,s))_{0\leq s\leq t\leq
    1}$ is the evolution system for $A = A(t)$; see \citet[Chapter
  5]{Pazy:83}, but we shall not use this relation. Of course, the
  properties established above for $u = u(t,x)$ also hold for the
  solution $v=v(t,x)$ to~\eqref{eq:20}. It follows that for $h\in
  \mathbf{L}_p$ the map $t\mapsto T(t)h$ of $[0,1]$ to $\mathbf{L}_p$
  is well-defined and continuous and if $h\in \mathbf{W}^2_p$ then the
  same map is also continuous in $\mathbf{W}^2_p$. Recall now that
  $\mathbf{W}^1_p$ is an interpolation space between $\mathbf{L}^p$
  and $\mathbf{W}^2_p$, more precisely, a midpoint in complex
  interpolation, see \cite[Theorem 6.4.5]{BergLofs:76}. Since
  $\mathbf{W}^2_p$ is dense in $\mathbf{W}^1_p$, Lemma~\ref{lem:1}
  yields the continuity of the map $t\mapsto T(t)h$ of $[0,1]$ to
  ${\mathbf{W}^1_p}$.

  Observe now that $u=u(t,x)$ can be decomposed as
  \begin{displaymath}
    u(t,\cdot) = T(t)g + w(t,\cdot), 
  \end{displaymath}
  where $w(t,\cdot)$ is the unique classical solution in
  $\mathbf{L}_p$ of the inhomogeneous problem:
  \begin{displaymath}
    \frac{\partial w}{\partial t} = A(t)w + f, \quad w(0,\cdot) = 0. 
  \end{displaymath}
  Since $w$ coincides with $u$ in the special case $g=0$, the map
  $t\mapsto w(t,\cdot)$ is continuous in $\mathbf{W}^2_p$ and, hence,
  also continuous in $\mathbf{W}^1_p$.  This completes the proof of
  item~\ref{item:7}.

  In item~\ref{item:8} it only remains to verify the analyticity of
  the map $t\mapsto u(t,\cdot)$ of $(0,1)$ to $\mathbf{W}^2_p$. Fix
  $0<\epsilon<1/2$. The condition~\ref{item:4} implies the analyticity
  of the function
  $\map{A=A(t)}{[\epsilon,1-\epsilon]}{\mathcal{L}(\mathbf{W}^2_p,\mathbf{L}_p)}$. Let
  $U$ be an open convex set in $\mathbb{C}$ containing
  $[\epsilon,1-\epsilon]$ on which there is an analytic extension of
  $A$ satisfying the assertions of Lemma~\ref{lem:3}. We choose $U$ so
  that $\map{f = f(t,\cdot)}{[\epsilon,1-\epsilon]}{\mathbf{L}_p}$ can
  also be analytically extended on $U$. Theorem 2 in
  \citet{KatoTanabe:67} now implies the analyticity of the map
  $t\mapsto u(t,\cdot)$ of $[\epsilon,1-\epsilon]$ to $\mathbf{L}_p$.
  However, as
  \begin{displaymath}
    u(t,\cdot) = \bigl(A(t)\bigr)^{-1} (\frac{\partial u}{\partial t} -
    f(t,\cdot)), 
  \end{displaymath}
  and since, by Lemma~\ref{lem:3}, the
  $\mathcal{L}(\mathbf{L}_p,\mathbf{W}^2_p)$-valued function
  $\bigl(A(t)\bigr)^{-1}$ on $[\epsilon,1-\epsilon]$ is analytic, the
  map $t\mapsto u(t,\cdot)$ on $[\epsilon,1-\epsilon]$ is also
  analytic in $\mathbf{W}^2_p$. As $\epsilon>0$ is any small number we
  obtain the result.
\end{proof}

In the proof of our main Theorem~\ref{th:1} we actually use
Theorem~\ref{th:3} below, which is a corollary of Theorem~\ref{th:2}.
Instead of~\ref{item:5} we assume that the measurable functions
$g=\map{g(x)}{\mathbb{R}^d}{\mathbb{R}}$ and
$f=\map{f(t,x)}{[0,1]\times \mathbb{R}^d}{\mathbb{R}}$ have the
following properties:
\begin{enumerate}[label=(B\arabic{*}), ref=(B\arabic{*})]
  \setcounter{enumi}{\value{item}}
\item \label{item:9} There is a constant $N\geq 0$ such that
  $e^{-N\abs{\cdot}}\frac{\partial g}{\partial
    x^i}(\cdot)\in\mathbf{L}_{\infty}$ and for every $p \geq 1$ the
  map $t\mapsto e^{-N\abs{\cdot}} f(t,\cdot)$ from $[0,1]$ to
  $\mathbf{L}_{p}$ is analytic on $(0,1)$ and H\"older continuous on
  $[0,1]$. \setcounter{item}{\value{enumi}}
\end{enumerate}
Fix a function $\phi = \phi(x)$ such that
\begin{equation}
  \label{eq:21}
  \text{$\phi \in \mathbf{C}^\infty(\mathbb{R}^d)$ and $\phi(x) = \abs{x}$
    when $\abs{x}\geq 1$.}
\end{equation}

\begin{Theorem}
  \label{th:3}
  Suppose the conditions~\ref{item:4} and~\ref{item:9} hold. Let $\phi
  = \phi(x)$ be as in~\eqref{eq:21}. Then there exists a unique
  continuous function $u=u(t,x)$ on $[0,1]\times \mathbb{R}^d$ and a
  constant $N\geq 0$ such that for every $p\geq 1$
  \begin{enumerate}
  \item $t\mapsto e^{-N \phi}u(t,\cdot)$ is a H\"older continuous map
    of $[0,1]$ to $\mathbf{L}_p$ whose restriction on $(0,1]$ is
    continuously differentiable,
  \item $t\mapsto e^{-N \phi}u(t,\cdot)$ is a continuous map of
    $[0,1]$ to $\mathbf{W}^{1}_p$,
  \item $t\mapsto e^{-N \phi}u(t,\cdot)$ is a continuous map of
    $(0,1]$ to $\mathbf{W}^{2}_p$ whose restriction on $(0,1)$ is
    analytic,
  \end{enumerate}
  and such that $u=u(t,x)$ solves the Cauchy problem \eqref{eq:6} and
  \eqref{eq:7}.
\end{Theorem}
 
\begin{proof} From~\ref{item:9} we deduce the existence of $M>0$ such
  that
  \begin{displaymath}
    \abs{\frac{\partial g}{\partial x^i}}(x) \leq M e^{M\abs{x}}, \quad
    x\in \mathbb{R}^d,
  \end{displaymath}
  and, therefore, such that
  \begin{displaymath}
    \abs{g(x) - g(0)} \leq M \abs{x} e^{M\abs{x}}, \quad x \in
    \mathbb{R}^d. 
  \end{displaymath}
  Hence, for $N > M$ and a function $\phi = \phi(x)$ as
  in~\eqref{eq:21}
  \begin{displaymath}
    e^{-N\phi}g \in \mathbf{W}^{1}_{p}, \quad p \geq 1. 
  \end{displaymath}
  Hereafter, we choose the constant $N\geq 0$ so that in addition
  to~\ref{item:9} it also has the property above.

  Define the functions $\widetilde{b}^i = \widetilde{b}^i(t,x)$ and
  $\widetilde c = \widetilde c(t,x)$ so that for $t\in [0,1]$ and
  $v\in \mathbf{C}^\infty$
  \begin{displaymath}
    \widetilde A(t) \bigl(e^{-N \phi} v\bigr) = e^{-N
      \phi} A(t) v, 
  \end{displaymath}
  where
  \begin{displaymath}
    \widetilde A(t) \set  \sum_{i,j=1}^d a^{ij}(t,x) \frac{\partial^2}{\partial
      x^i \partial x^j} + \sum_{i=1}^d \widetilde{b}^i(t,x) \frac{\partial}{\partial  
      x^i} + \widetilde{c}(t,x).
  \end{displaymath}
  Direct computations show that $\widetilde{b}^i$ and $\widetilde c$
  satisfy same conditions as $b^i$ and $c$ in~\ref{item:4}. From
  Theorem~\ref{th:2} we deduce the existence of a measurable function
  $\widetilde u = \widetilde u(t,x)$ which for $p>1$ complies with the
  items \ref{item:6}--\ref{item:8} of Theorem~\ref{th:2} and solves
  the Cauchy problem:
  \begin{displaymath}
    \frac{\partial \widetilde u}{\partial t} = \widetilde
    A(t)\widetilde u + e^{-N \phi}f, \quad \widetilde
    u(0,\cdot) = e^{-N \phi}g.
  \end{displaymath}
  For $p>d$, by the classical Sobolev's embedding, the continuity of
  the map $t\mapsto \widetilde u(t,\cdot)$ in $\mathbf{W}^1_p$ implies
  its continuity in $\mathbf{C}$. In particular, we obtain that the
  function $\widetilde u = \widetilde u(t,x)$ is continuous on
  $[0,1]\times \mathbb{R}^d$.

  To conclude the proof it only remains to observe that $u=u(t,x)$
  complies with the assertions of the theorem for $p>1$ if and only if
  $\widetilde u \set e^{-N \phi}u$ has the properties just
  established. The case $p=1$ follows trivially from the case $p>1$ by
  taking $N$ slightly larger.
\end{proof}

\section{Proof of Theorem~\ref{th:1}}
\label{sec:proof-theorem}

We assume the conditions and notations of Theorem~\ref{th:1}. Observe
that without any loss in generality we can take
\begin{equation}
  \label{eq:22}
  \gamma^i(t,x) =0 \quad\text{and, hence,}\quad \widetilde{\mathbb{P}} =
  \mathbb{P}. 
\end{equation}
Indeed, by Girsanov's theorem,
\begin{displaymath}
  \widetilde W_t \set W_t - \int_0^t \gamma(s,X_s) ds
\end{displaymath}
is a Brownian motion under $\widetilde{\mathbb{P}}$. After this
substitution the equation~\eqref{eq:3} becomes
\begin{displaymath}
  dX_t = \bigl(b(t,X_t) + \sigma(t,X_t)\gamma(t,X_t)\bigr)dt +
  \sigma(t,X_t) d\widetilde W_t, \quad X_0 = x, 
\end{displaymath}
and the argument follows from the fact, that, like $b$, each component
of $\widetilde{b}\set b + \sigma \gamma$ defines a map of $[0,1]$ to
$\mathbf{L}_{\infty}$ which is analytic on $(0,1)$ and H\"older
continuous on $[0,1]$.

Hereafter we assume~\eqref{eq:22}. We fix a function $\phi$
satisfying~\eqref{eq:21}.  We also denote by $L(t)$ the infinitesimal
generator of $X$ at $t\in [0,1]$:
\begin{displaymath}
  L(t) = \frac12 \sum_{i,j=1}^d a^{ij}(t,x) \frac{\partial^2}{\partial
    x^i \partial x^j} + \sum_{i=1}^d b^i(t,x) \frac{\partial}{\partial
    x^i}, 
\end{displaymath}
where $a\set \sigma\sigma^*$ is the covariation matrix of $X$.

\begin{Lemma}
  \label{lem:4}
  There exist unique continuous functions $u=u(t,x)$ and $v^j =
  v^j(t,x)$, ${j=1,\dots,J}$, on $[0,1]\times \mathbb{R}^d$ and a
  constant $N\geq 0$ such that
  \begin{enumerate}
  \item \label{item:10} For $p \geq 1$ the maps $t\mapsto e^{-N
      \phi}u(t,\cdot)$ and $t\mapsto e^{-N \phi} v^j(t,\cdot)$ are
    \begin{enumerate}
    \item H\"older continuous maps of $[0,1]$ to $\mathbf{L}_p$ whose
      restrictions on $[0,1)$ are continuously differentiable,
    \item continuous maps of $[0,1]$ to $\mathbf{W}^1_p$,
    \item continuous maps of $[0,1)$ to $\mathbf{W}^2_p$ whose
      restrictions on $(0,1)$ are analytic.
    \end{enumerate}
  \item \label{item:11} The function $u=u(t,x)$ solves the Cauchy
    problem:
    \begin{align}
      \label{eq:23}
      \frac{\partial u}{\partial t} + ( L(t)+\beta)u &= 0, \quad t\in
      [0,1), \\
      \label{eq:24}
      u(1,\cdot) &= G,
    \end{align}
  \item \label{item:12} The function $v^j = v^j(t,x)$ solves the
    Cauchy problem:
    \begin{align}
      \label{eq:25}
      \frac{\partial v^j}{\partial t} + ( L(t)+ \alpha^j+\beta)v^j +
      uf^j + g^j&= 0, \quad t\in [0,1), \\
      \label{eq:26}
      v^j(1,\cdot) &= F^j G.
    \end{align}
  \end{enumerate}
\end{Lemma}
\begin{proof} Observe first that~\ref{item:1} on $\sigma=\sigma(t,x)$
  implies \ref{item:4} on the covariation matrix $a=a(t,x)$. The
  assertions for $u=u(t,x)$ and, then, for $v^j=v^j(t,x)$,
  $j=1,\dots,J$, follow now directly from Theorem~\ref{th:3}, where we
  need to make the time change $t\to 1-t$.
\end{proof}

Hereafter, we denote by $u=u(t,x)$ and $v^j=v^j(t,x)$,
${j=1,\dots,J}$, the functions defined in Lemma~\ref{lem:4}.

\begin{Lemma}
  \label{lem:5}
  The matrix-function $w = w(t,x)$, with $d$ rows and $J$ columns,
  given by
  \begin{equation}
    \label{eq:27}
    w^{ij}(t,x) \set  \left(u\frac{\partial v^j}{\partial x^i} -
      v^j\frac{\partial u}{\partial x^i}\right)(t,x), 
    \quad i=1,\dots,d, \;
    j=1,\dots,J,  
  \end{equation}
  has rank $d$ almost surely with respect to the Lebesgue measure on
  $[0,1]\times \mathbb{R}^d$.
\end{Lemma}
\begin{proof}
  Denote
  \begin{displaymath}
    g(t,x) \set \det (ww^*)(t,x), \quad (t,x)\in [0,1]\times \mathbb{R}^d,
  \end{displaymath}
  the determinant of the product of $w$ on its transpose, and observe
  that the result holds if and only if the set
  \begin{displaymath}
    A \set \descr{(t,x)\in [0,1]\times \mathbb{R}^d}{g(t,x) = 0} 
  \end{displaymath}
  has the Lebesgue measure zero on $[0,1]\times \mathbb{R}^d$ or,
  equivalently, the set
  \begin{displaymath}
    B \set \descr{x\in \mathbb{R}^d}{\int_0^1 \setind{A}(t,x) dt > 0} 
  \end{displaymath}
  has the Lebesgue measure zero on $\mathbb{R}^d$.

  From Lemma~\ref{lem:4} we deduce that the existence of a constant
  $N\geq 0$ such that for $p\geq 1$ the map $t\mapsto e^{-N
    \phi}g(t,\cdot)$ from $(0,1)$ to $\mathbf{W}^1_p$ is analytic and
  the same map of $[0,1]$ to $\mathbf{L}_p$ is continuous. Taking
  $p>d$, we deduce from the embedding of $\mathbf{W}^1_p$ into
  $\mathbf{C}$ that this map is also analytic from $(0,1)$ to
  $\mathbf{C}$.  It follows that if $x\in B$ then $g(t,x) = 0$ for all
  $t\in (0,1)$ and, in particular,
  \begin{displaymath}
    \lim_{t\uparrow 1} g(t,x) = 0, \quad x\in B. 
  \end{displaymath}
  Since
  \begin{displaymath}
    \norm{g(t,\cdot)- g(1,\cdot)}_{\mathbf{L}^p} = \norm{g(t,\cdot)-
      \det(ww^*)(1,\cdot)}_{\mathbf{L}^p} \to 0, \quad t\uparrow 1,
  \end{displaymath}
  the Lebesgue measure of $B$ is zero if the matrix-function
  $w(1,\cdot)$ has rank $d$ almost surely. This follows from the
  expression for $w(1,\cdot)$:
  \begin{displaymath}
    w^{ij}(1,\cdot) = G\frac{\partial (F^jG)}{\partial x^i} -
    F^jG\frac{\partial G}{\partial x^i} = G^2\frac{\partial
      F^j}{\partial x^i},
  \end{displaymath}
  and the assumption~\ref{item:2} on $F = (F^j)$ and $G$.
\end{proof}

Recall that in addition to the conditions of the theorem we also
assume~\eqref{eq:22}.

\begin{Lemma}
  \label{lem:6}
  The martingales
  \begin{align*}
    Y_t & \set \mathbb{E}[\xi|\mathcal{F}_t], \\
    R^j_t &\set \mathbb{E}[\xi \psi^j|\mathcal{F}_t], \quad
    j=1,\dots,J.
  \end{align*}
  are well-defined and have the representations
  \begin{align}
    \label{eq:28}
    Y_t & = u(t,X_t) e^{\int_0^t \beta(s,X_s)ds}, \\
    \label{eq:29}
    R^j_t & = v^j(t,X_t)e^{\int_0^t (\alpha^j+\beta)(s,X_s)ds} + Y_t
    A^j_t,
  \end{align}
  where
  \begin{displaymath}
    A^j_t \set \int_0^t  \left(f^j(s,X_s)  +
      \frac{g^j(s,X_s)}{Y_s} e^{\int_0^s 
        \beta(r,X_r)dr}\right) e^{\int_0^s \alpha^j(r,X_r)dr} ds.
  \end{displaymath}
  Moreover, for $t\in (0,1)$,
  \begin{align}
    \label{eq:30}
    dY_t &= \sum_{i,k=1}^d e^{\int_0^t \beta(s,X_s)ds}
    \left(\frac{\partial u}{\partial x^i}\sigma^{ik}\right)(t,X_t)
    d{W}^k_t, \\
    \label{eq:31}
    dR^j_t &= \sum_{i,k=1}^d e^{\int_0^t (\alpha^j+\beta)(s,X_s)ds}
    \left(\frac{\partial v^j}{\partial x^i}\sigma^{ik}\right)(t,X_t)
    d{W}^k_t + A^j_t dY_t.
  \end{align}
\end{Lemma}
\begin{proof} Assume that $Y$ and $R^j$ are actually \emph{defined}
  by~\eqref{eq:28} and~\eqref{eq:29}. From the continuity of $u$ and
  $v^j$ on $[0,1]\times \mathbb{R}^d$ we obtain that such $Y$ and
  $R^j$ are continuous processes on $[0,1]$ and from the
  expressions~\eqref{eq:24} and~\eqref{eq:26} for $u(1,\cdot)$ and
  $v^j(1, \cdot)$ that $Y_1 = \xi$ and $R^j_1 = \xi\psi^j$. Hence, to
  complete the proof we only have to show that $Y$ and $R^j$ given
  by~\eqref{eq:28} and~\eqref{eq:29} are martingales.

  Let $N\geq 0$ be the constant in Lemma~\ref{lem:4}.  Choosing $p =
  d+1$ in Lemma~\ref{lem:4} we deduce that the maps $t\mapsto e^{-N
    \phi}u(t,\cdot)$ and $t\mapsto e^{-N \phi}v^j(t,\cdot)$ of $(0,1)$
  to $\mathbf{W}^{2}_{d+1}$ are analytic and, in particular,
  continuously differentiable. This enables us to use a variant of the
  Ito formula due to Krylov, see \cite[Section 2.10,
  Theorem~1]{Krylov:80}. Direct computations, where we account
  for~\eqref{eq:23} and~\eqref{eq:25}, then yield \eqref{eq:30}
  and~\eqref{eq:31}.

  In particular, we have shown that $Y$ and $R^j$ are continuous local
  martingales. It only remains to verify their uniform integrability.
  By Sobolev's embeddings, since $t\mapsto e^{-N \phi}u(t,\cdot)$ and
  $t\mapsto e^{-N \phi}v^j(t,\cdot)$ are continuous maps of $[0,1]$ to
  $\mathbf{W}^1_{d+1}$, they are also continuous maps of $[0,1]$ to
  $\mathbf{C}$. In particular,
  \begin{displaymath}
    \abs{u(t,x)} +  \abs{v^j(t,x)} \leq e^{N(1+\abs{x})}. 
  \end{displaymath}
  Accounting for the growth properties of $f^j$, $\alpha^j$, and
  $\beta$ and denoting
  \begin{displaymath}
    B_t^j \set  \int_0^t  \frac{g^j(s,X_s)}{Y_s} e^{\int_0^s 
      (\alpha^j(r,X_r)+\beta(r,X_r))dr}ds, 
  \end{displaymath}
  we deduce the existence of a constant $N>0$ such that
  \begin{displaymath}
    \sup_{t\in [0,1]}(\abs{Y_t} + \abs{R^j_t-Y_t B_t^j}) \leq e^{N(1 + \sup_{t\in
        [0,1]} \abs{X_t})}.
  \end{displaymath}   
  As $\sup_{t\in [0,1]} \abs{X_t}$ has all exponential moments we
  obtain the martingale property for $Y$. The martingale property for
  $R^j$ follows as soon as we verify the uniform integrability of
  $(Y_t B^j_t)_{t\in [0,1]}$.

  From the growth properties of $g^j$, $\alpha^j$, and $\beta$ we
  deduce the existence of a constant $N>0$ such that
  \begin{align*}
    Y_t \abs{B^j_t} & \leq Y_t\int_0^t \frac1{Y_s} e^{N(1+\abs{X_s})}
    ds = \mathbb{E}[Y_1|\mathcal{F}_t]\int_0^t \frac1{Y_s}
    e^{N(1+\abs{X_s})}
    ds \\
    & \leq \mathbb{E}[Y_1\int_0^1 \frac1{Y_s}
    e^{N(1+\abs{X_s})}|\mathcal{F}_t], \quad t\in [0,1],
  \end{align*}
  which implies the the uniform integrability of $(Y_t B^j_t)_{t\in
    [0,1]}$ because
  \begin{displaymath}
    \mathbb{E}[Y_1\int_0^1 \frac1{Y_s}
    e^{N(1+\abs{X_s})}] = \mathbb{E}[\int_0^1 e^{N(1+\abs{X_s})}] <
    \infty. 
  \end{displaymath}
\end{proof}

We are ready to complete the proof of the theorem.  Lemma~\ref{lem:6}
implies, in particular, that
\begin{displaymath}
  \mathbb{E}[\abs{\xi} + \sum_{j=1}^J \abs{\xi \psi^j} ] < \infty, 
\end{displaymath}
and, hence, the probability measure $\mathbb{Q}$ and the
$\mathbb{Q}$-martingale $S=(S^j)$ are well-defined. Since $\xi>0$, the
measure $\mathbb{Q}$ is equivalent to $\mathbb{P}$ and the martingale
$Y$ is strictly positive. Observe that
\begin{displaymath}
  S_t \set \mathbb{E}^{\mathbb{Q}}[\psi|\mathcal{F}_t] = \frac{
    \mathbb{E}[\xi\psi|\mathcal{F}_t]}{\mathbb{E}[\xi|\mathcal{F}_t]} 
  = \frac{R_t}{Y_t}, \quad t\in [0,1].
\end{displaymath}
From~\eqref{eq:30} and~\eqref{eq:31} we deduce, after some
computations, that
\begin{equation}
  \label{eq:32}
  dS^j_t = d\frac{R^j_t}{Y_t} = e^{\int_0^t \alpha^j(s,X_s)ds}
  \frac1{u^2(t,X_t)}\sum_{i,k=1}^d(w^{ij}\sigma^{ik})(t,X_t) 
  dW^{\mathbb{Q},k}_t, 
\end{equation}
where the matrix-function $w=w(t,x)$ is defined in~\eqref{eq:27} and
\begin{displaymath}
  W^{\mathbb{Q},k}_t  \set W^k_t
  - \sum_{l=1}^d \int_0^t \left(\frac1u \frac{\partial u}{\partial
      x^l}\sigma^{lk}\right)(t,X_t) dt, 
  \quad k=1,\dots,d, \; t\in [0,1].  
\end{displaymath}
By Girsanov's theorem, $W^{\mathbb{Q}}$ is a Brownian motion under
$\mathbb{Q}$. Note that the division on $u(t,X_t)$ is safe as the
process $u(t,X_t) = Y_t e^{-\int_0^t \beta(s,X_s)ds}$, $t\in [0,1]$,
is strictly positive.

As we have already observed in Remark~\ref{rem:2}, every
$\mathbb{P}$-local martingale is a stochastic integral with respect to
$W$. This readily implies that every $\mathbb{Q}$-local martingale $M$
is a stochastic integral with respect to $W^{\mathbb{Q}}$. Indeed,
since $L\set YM$ is a local martingale under $\mathbb{P}$, there is a
predictable process $\zeta$ with values in $\mathbb{R}^d$ such that
\begin{displaymath}
  L_t = L_0 + \int_0^t \zeta_u dW_u \set L_0 + \sum_{i=1}^d \int_0^t
  \zeta^i_u dW^i_u 
\end{displaymath}
and then
\begin{displaymath}
  dM_t = d\frac{L_t}{Y_t} = \frac1{Y_t}\sum_{i=1}^d \left(\zeta^i_t - L_t
    \sum_{k=1}^d \left(\frac1u \frac{\partial u}{\partial x^k} \sigma^{ki}
    \right)(t,X_t)\right) dW^{\mathbb{Q},i}_t.   
\end{displaymath}

In view of~\eqref{eq:32}, to conclude the proof we only have to show
that the matrix-process $ ((w^*\sigma)(t,X_t))_{t\in [0,1]}$ has rank
$d$ on $\Omega\times [0,1]$ almost surely under the product measure
$dt\times d\mathbb{P}$.  Observe first that by~\eqref{eq:2} and
Lemma~\ref{lem:5} the matrix-function $w^*\sigma = (w^*\sigma)(t,x)$
has rank $d$ almost surely under the Lebesgue measure on $[0,1]\times
\mathbb{R}^d$. The result now follows from the well-known fact that
under~\ref{item:1} the distribution of $X_t$ has a density under the
Lebesgue measure on $\mathbb{R}^d$, see
\cite[Theorem~9.1.9]{StrVarad:06}.

\section*{Acknowledgments}
\label{sec:acknowledgements}

We thank Frank Riedel for introducing us to the topic of endogenous
completeness. It is a pleasure to thank our colleagues William Hrusa,
Giovanni Leoni, Dejan Slep\v{c}ev, and Luc Tartar for discussions and
Steven Shreve for the list of corrections to the previous version of
this paper.

\bibliographystyle{plainnat}

\bibliography{../bib/finance}

\end{document}